\documentclass[12pt]{amsart}
\usepackage[utf8]{inputenc}
\usepackage{amsmath,amsfonts}
\usepackage{amsthm}
\usepackage{amssymb}
\usepackage{hyperref}
\usepackage[shortlabels]{enumitem}
\usepackage{bm}
\usepackage{multirow}
\usepackage{tikz-cd}
\newtheorem{thm}{Theorem}[section]
\newtheorem{lem}[thm]{Lemma}
\newtheorem{prop}[thm]{Proposition}
\newtheorem{cor}[thm]{Corollary}
\newtheorem*{thm*}{Theorem}
\newtheorem{conj}{Conjecture}
\theoremstyle{definition}
\newtheorem{definition}[thm]{Definition}
\newtheorem{remark}[thm]{Remark}
\newtheorem*{dfn*}{Definition}

\newtheorem*{ques*}{Question}

\newcommand{\R}{\mathbb{R}}
\newcommand{\N}{\mathbb{N}}
\newcommand{\Q}{\mathbb{Q}}

\newcommand{\Z}{\mathbb{Z}}

\title[The AMU conjecture for the derived subgroup of $J_2(\Sigma_{g,n})$]{The Andersen-Masbaum-Ueno conjecture for the derived subgroup of the Johnson kernel}
\author{Renaud Detcherry}
\address{Institut de Mathématiques de Bourgogne \& Institut Universitaire de France, UMR 5584 CNRS, Université Bourgogne Franche-Comté, F-2100 Dijon, France}
\email{renaud.detcherry@u-bourgogne.fr}

\date{}
\begin{document}
	
	\maketitle
	
	\begin{abstract}A conjecture of Andersen, Masbaum and Ueno states that for any compact oriented surface $\Sigma_{g,n}$ and any pseudo-Anosov $f\in \mathrm{Mod}(\Sigma_{g,n}),$ the matrix $\rho_r(f)$ has infinite order for any large $r,$ where $\rho_r$ is the $\mathrm{SO}(3)$-WRT quantum representation of the mapping class group $\mathrm{Mod}(\Sigma_{g,n})$ at a primitive $r$-th root of unity.
	We prove this conjecture for prime $r$ and any $f\in [J_2(\Sigma_{g,n}),J_2(\Sigma_{g,n})],$ where $J_2(\Sigma_{g,n})$ is the Johnson kernel.
	\end{abstract}

\section{Introduction}
\label{sec:intro}

Let $\Sigma_{g,n}$ be a compact oriented surface of genus $g$ and $n$ boundary components. The (pure) mapping class group of $\Sigma_{g,n}$ is
$$\mathrm{Mod}(\Sigma_{g,n})=\mathrm{Homeo}^+(\Sigma_{g,n})/\mathrm{Homeo}_0(\Sigma_{g,n})$$
where $\mathrm{Homeo}_0(\Sigma_{g,n})$ stands for homeomorphisms that are isotopic to the identity, and we require homeomorphisms and isotopy to be the identity on the boundary.

The Nielsen-Thurston classification separates mapping classes into $3$ large types: finite order elements, reducible elements and pseudo-Anosov elements. Pseudo-Anosov elements stabilize two transverse singular foliations $\mathcal{F}^u,\mathcal{F}^s$  on $\Sigma_{g,n},$ contracting a transverse measure on $\mathcal{F}^s$ and dilating a transverse measure on $\mathcal{F}^u$ by a factor $\lambda>1.$ They may also be characterized by their action on isotopy classes of simple closed curves having no finite orbit. 

Reducible maps stabilize (up to isotopy) a multicurve on $\Sigma_{g,n}.$ Up to taking a power, a reducible mapping class may be restricted to a partition of $\Sigma_{g,n}$ into subsurfaces; we then say that it has a pseudo-Anosov part if its restriction to one subsurface is pseudo-Anosov.

Mapping class groups of surfaces enjoy a rich variety of linear representations, and a natural question is whether pseudo-Anosov maps may be described in terms of representations of the mapping class groups. To this end, a conjecture of Andersen, Masbaum and Ueno gives a simple criterion for a mapping class to have a pseudo-Anosov part, in terms of the so-called $\mathrm{SO}_3$-Witten-Reshetikhin-Turaev representations. 

Let $S=\Sigma_{g,n}$ and $|\partial S|$ be the set of connected components of $\partial S.$ For any odd integer $r\geq 3,$ and any map
$$c:|\partial S|\longrightarrow I_r=\lbrace 0,2,\ldots,r-3\rbrace,$$
the $\mathrm{SO}_3$-WRT quantum representation $\rho_{r,c}$ is a map:
$$\rho_{r,c}: \mathrm{Mod}(\Sigma_{g,n})\longrightarrow \mathrm{PAut}(RT_r(S,c)),$$
where $RT_p(S,c)$ is a $\Q(\zeta_r)$-vector space, where $\zeta_r$ is a primitive $r$-th root of unity, called the WRT TQFT-vector space. When $S$ has boundary, we denote by $\rho_r$ the direct sum of the representations $\rho_{r,c}.$ The Andersen-Masbaum-Ueno conjecture (or AMU conjecture) is then the following:
\begin{conj}(AMU conjecture \cite{AMU06}) Let $\Sigma_{g,n}$ be a compact oriented surface and $f\in \mathrm{Mod}(\Sigma_{g,n}).$ Then $f$ has a pseudo-Anosov part if and only if $\rho_r(f)$ has infinite order for all large enough odd $r\geq 3.$ 
\end{conj}

It is not difficult to prove that when $f$ has no pseudo-Anosov part, then $\rho_r(f)$ has finite order; this observation is what motivated the conjecture in \cite{AMU06}. Moreover, it is not hard to show that the conjecture would follow from the case of pseudo-Anosov mapping classes.

The AMU conjecture has only been proved for a handful cases of surfaces $S$ and pseudo-Anosov maps $f.$ In \cite{AMU06} and \cite{San12}, the conjecture is proved for pseudo-Anosov mapping classes of $\mathrm{Mod}(\Sigma_{0,4})$ and $\mathrm{Mod}(\Sigma_{1,1})$ respectively, by relating the quantum representation to the homology representation of the mapping class group. In \cite{EgJo} and \cite{San17}, the conjecture is proved for certain mapping classes of $\mathrm{Mod}(\Sigma_{0,n})$ where $n\geq 5,$ by relating the quantum representation to homological representations on ramified covers of the surface $\Sigma_{0,n}.$

The first pseudo-Anosov examples of the AMU conjecture in genus at least two were obtained by Marché and Santharoubane in \cite{MS21}. Those examples belong to $\mathrm{Mod}(\Sigma_{g,1})$ and are elements of the point-pushing subgroups coming from the Birman exact sequence. 

In \cite{DK19}, the author and Kalfgianni constructed many more pseudo-Anosov examples of the conjecture, belonging to $\mathrm{Mod}(\Sigma_{g,n})$ where either $n=2$ and $g\geq 3$ or $g\geq n \geq 2.$ The examples occured as monodromies of fibered links whose Turaev-Viro invariants grow exponentially. In a follow-up work \cite{DK22}, we extended this construction to provide pseudo-Anosov mapping classes in $\mathrm{Mod}(\Sigma_{g,1})$ for $g\geq 9,$ as monodromies of open book decompositions on $3$-manifolds whose Turaev-Viro invariants grow exponentially. 

We note that a common feature of all those examples is that they rely on asymptotic estimates of the $\rho_r(f)$ (and often, of their traces). Indeed, it is shown in \cite{DK19} that the AMU conjecture would follow from the Chen-Yang volume conjecture \cite{CY18}.

In this article, we use an entirely different strategy, replacing analysis by number theory as our main tool. Let $J_2(\Sigma_{g,n})$ be the Johnson kernel, that is, the subgroup of $\mathrm{Mod}(\Sigma_{g,n})$ generated by Dehn twists along separating cruves. A celebrated theorem of Johnson \cite{Joh85} characterizes this subgroup for $n\leq 1$ as the subgroup that acts trivially on $\pi_1(\Sigma_{g,n})/\Gamma^{3} (\pi_1(\Sigma_{g,n})),$ where $$\Gamma^{k}(G)=[G,[G,[\ldots,[G,G]]]]$$ ($G$ appears $k$ times). For $k\geq 1,$ and $n\leq 1,$  the $k$-th Johnson kernel was further defined to be the subgroup which acts trivially on $\pi_1(\Sigma_{g,n})/\Gamma^{k+1} (\pi_1(\Sigma_{g,n})).$ We prove:
\begin{thm}
	\label{thm:main} Let $\Sigma_{g,n}$ be a compact oriented surface of genus $g$ and $n$ boundary components. Let $f\in [J_2(\Sigma_{g,n}),J_2(\Sigma_{g,n}))]$ be non-trivial. Then for any  prime $p$ large enough, $\rho_p(f)$ has infinite order.
\end{thm}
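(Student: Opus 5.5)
The plan is to use the integral structure of the $\mathrm{SO}_3$ quantum representations at prime level, and to replace analytic estimates by a $(1-\zeta_p)$-adic valuation argument. Let $\mathcal{O}_p=\Z[\zeta_p]$ and $h=1-\zeta_p$. Recall that $h$ generates a prime ideal of $\mathcal{O}_p$ with $(p)=(h)^{p-1}$.

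\textbf{Setup and reduction.} By Gilmer--Masbaum, for $p$ prime each $RT_p(S,c)$ contains a free $\mathcal{O}_p$-lattice $\mathcal{S}_p(S,c)$ preserved by a lift of the mapping class group action, and these lifts are well defined up to roots of unity. Since $f\in[J_2,J_2]$ is a product of commutators, I will fix lifts of the generators. The resulting commutators are then honest matrices in $\mathrm{GL}(\mathcal{S}_p)$, independent of the choice of scalar lifts. Call this matrix $M_f$.

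It suffices to show that $M_f$ has infinite order. Indeed, if $\rho_p(f)$ had finite order in $\mathrm{PAut}$, then some power $M_f^m$ would be a scalar. I will handle this case by a second application of the same argument, first replacing $f$ by a suitable power and normalizing. I also need $M_f$ to be non-scalar for $p$ large. This I will take from asymptotic faithfulness of quantum representations (Andersen; Freedman--Walker--Wang), since $f\neq 1$ and $f$ is in the pure mapping class group.

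\textbf{Step 1: $J_2$ acts trivially modulo $h$.} Let $c$ be a separating curve. After conjugating by a mapping class, which preserves the lattice, I can place $c$ in a lollipop-type pants decomposition. For such decompositions Gilmer--Masbaum give an explicit $\mathcal{O}_p$-basis of $\mathcal{S}_p$ in which $T_c$ acts diagonally. Its eigenvalues are twist coefficients of even colors, which are (up to the chosen normalization) powers of $\zeta_p$, hence $\equiv 1 \bmod h$. Therefore $T_c\equiv \mathrm{Id}\bmod h$, and the same holds for every element of $J_2$.

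Writing $X=\mathrm{Id}+hA$ and $Y=\mathrm{Id}+hB$, a direct expansion gives $XYX^{-1}Y^{-1}\equiv \mathrm{Id}+h^2[A,B]\bmod h^3$. Consequently $M_f\equiv \mathrm{Id}\bmod h^2$. I expect Step 1 to be the main obstacle. The delicate points are:
\begin{itemize}
\item checking that a separating twist really is diagonal in some integral basis of the lattice, and not merely unipotent modulo $h$;
\item controlling the scalar normalizations, so that the lifted twist is congruent to $\mathrm{Id}$ and not just to a root of unity times $\mathrm{Id}$.
\end{itemize}
If the diagonal basis fails for some curves, I would instead try to show that $J_2$ is generated by twists on curves of genus $1$ and $2$ that fit into lollipop decompositions.

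\textbf{Step 2: valuation argument.} Since $M_f\neq\mathrm{Id}$, let $k\geq 2$ be maximal with $M_f=\mathrm{Id}+h^kA$, so that $A\not\equiv 0\bmod h$. For $m'$ prime to $p$,
$$M_f^{m'}\equiv \mathrm{Id}+m'h^kA \pmod{h^{k+1}},$$
and $m'$ is a unit modulo $h$, so $M_f^{m'}$ keeps exact valuation $k$. For a $p$-th power,
$$(\mathrm{Id}+h^kA)^p=\mathrm{Id}+\sum_{i=1}^{p-1}\binom{p}{i}h^{ik}A^i+h^{pk}A^p .$$
The $i=1$ term has valuation exactly $p-1+k$ with leading coefficient a unit times $A$. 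The middle terms have valuation at least $p-1+2k$. The last term has valuation $pk$, and $pk>p-1+k$ precisely because $k\geq 2$; this is exactly where $k\geq2$ is used. So the $p$-th power again has the form $\mathrm{Id}+h^{k'}A'$ with $k'=p-1+k\geq 2$ and $A'\not\equiv0\bmod h$.

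By induction, no power $M_f^m$ equals $\mathrm{Id}$. The same computation, applied to $M_f^m\lambda^{-1}$ after observing that a scalar $\lambda$ congruent to $1$ modulo $h^2$ and of finite order must equal $1$ for $p$ odd, shows that no power of $M_f$ is a scalar. Hence $\rho_p(f)$ has infinite order for all large primes $p$.
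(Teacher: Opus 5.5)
\textbf{The gap is in Step 1 when $n\ge 2$ (and $g\ge 2$).} Gilmer--Masbaum only give the congruence $\rho_p(J_2)\equiv \mathrm{Id} \bmod h$ for $n\le 1$. With several colored boundary components, their integral basis is adapted only to separating curves that have all boundary components on one side, or that cut off a planar piece.

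Take a curve $\gamma$ cutting off a $\Sigma_{1,2}$ that contains one boundary component of $\Sigma_{g,n}$. It has genus and boundary on both sides, and no mapping class moves it to one of the adapted curves, since the type of a separating curve is fixed by the genus and the boundary partition. So your ``conjugate into a lollipop decomposition'' step gives nothing for $\gamma$. You then have no argument that $T_\gamma$ is $\equiv \mathrm{Id}$ on the lattice, rather than merely unipotent mod $h$. Your fallback also fails: by Boggi, twists cutting off $\Sigma_{1,2}$ are needed to generate $J_2(\Sigma_{g,n})$, and they are often part of a minimal normal generating set.

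The idea you are missing is the paper's reduction to a closed surface:
\begin{itemize}
\item Glue a one-holed torus $T_x$ onto each boundary component $x$ to get $\widehat S$, and extend $f$ by the identity to $\hat f$.
\item Separating curves of $S$ stay separating in $\widehat S$, so $\hat f\in[J_2(\widehat S),J_2(\widehat S)]$.
\item By monoidality of the TQFT, $\rho_p(\hat f)=\bigoplus_c \rho_{p,c}(f)\otimes \mathrm{id}_{\bigotimes_x RT_p(T_x,c(x))}$. Each $RT_p(T_x,c(x))$ is nonzero, so $\rho_p(f)$ and $\rho_p(\hat f)$ have the same projective order.
\end{itemize}
On $\widehat S$ the Gilmer--Masbaum congruence holds, and the rest of your argument goes through.

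The same reduction repairs your use of asymptotic faithfulness, which only detects elements modulo the center. On a surface with boundary, ``$f\neq 1$'' is not enough by itself, since boundary twists are central and lie in $J_2$. By contrast, $\hat f$ is a nontrivial element of the Torelli group of a closed surface, hence non-central.

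\textbf{The rest of your argument is correct.} Your Step 2 is a genuinely different route from the paper's. You use the Minkowski-type binomial expansion: an exact $h$-valuation $k\ge 2$ is preserved by prime-to-$p$ powers and raised to $k+p-1$ by $p$-th powers. The paper instead argues with eigenvalues, showing that $(u_n-1)/(\zeta_p-1)^2$ cannot be an algebraic integer by comparing normalized norms. It handles projectivity by embedding $PGL_d$ into $GL_{d^2}$.

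Your scalar argument also works, but you should say why $\lambda$ has finite order. Since $M_f$ is a product of commutators, $\det M_f=1$, so $\lambda^d=1$. Without this, $\lambda$ could a priori be a non-torsion cyclotomic unit.
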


The above theorem in particular provides the first pseudo-Anosov examples of the AMU conjecture for closed surfaces. Moreover, it is the first time that the AMU conjecture is proved for a natural subgroup of the mapping class group.

\begin{remark}
	\label{rk:pApart} An immediate consequence of Theorem \ref{thm:main} is that any non-trivial element of $[J_2(\Sigma_{g,n}),J_2(\Sigma_{g,n})]$ has a pseudo-Anosov part, since elements without a pseudo-Anosov part always have finite order images  by the representations $\rho_p.$ 
	
	While this fact is not surprising, the author was not able to locate a written proof of this in the litterature. 
	
	It should however be possible to even prove that any non-trivial element of $J_3(\Sigma_g)$ has a pseudo-Anosov part, using that Torelli groups are torsion-free (see \cite[Theorem 6.12]{FarbMargalit}) and showing that multitwists in the Torelli group have non-trivial image either by the first or second Johnson homomorphism.
\end{remark}

Theorem \ref{thm:main} also has the following corollary:
\begin{cor}
	\label{cor:main} Let $f\in \mathrm{Mod}(\Sigma_{g,n})$ be such that $f$ has a pseudo-Anosov part and that $f$ has finite order in $\mathrm{Mod}(\Sigma_{g,n})/[J_2(\Sigma_{g,n}),J_2(\Sigma_{g,n})].$ Then for any  prime $p$ large enough, $\rho_p(f)$ has infinite order. 
\end{cor}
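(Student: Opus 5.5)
We deduce Corollary \ref{cor:main} from Theorem \ref{thm:main} by passing to a power. Since $f$ has finite order in $\mathrm{Mod}(\Sigma_{g,n})/[J_2(\Sigma_{g,n}),J_2(\Sigma_{g,n})]$, there is an integer $m\geq 1$ with $f^m\in [J_2(\Sigma_{g,n}),J_2(\Sigma_{g,n})]$.

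First we check that $f^m$ is non-trivial. Suppose instead that $f^m=\mathrm{id}$. Then $f$ has finite order in $\mathrm{Mod}(\Sigma_{g,n})$, so it is a finite order element in the Nielsen--Thurston classification. A finite order mapping class has no pseudo-Anosov part: any power of $f$ restricted to any invariant subsurface still has finite order, and a pseudo-Anosov map has infinite order, since it stretches the transverse measure of $\mathcal{F}^u$ by $\lambda>1$. This contradicts the hypothesis. When $n\geq 1$ the situation is even simpler, because $\mathrm{Mod}(\Sigma_{g,n})$ (with isotopies fixing the boundary) is torsion-free; the argument above covers the closed case as well. Hence $f^m$ is a non-trivial element of $[J_2(\Sigma_{g,n}),J_2(\Sigma_{g,n})]$.

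Next we apply Theorem \ref{thm:main} to $f^m$. It gives a bound $p_0$ such that $\rho_p(f^m)$ has infinite order for every prime $p\geq p_0$. Since $\rho_p$ is a homomorphism into $\mathrm{PAut}(RT_p(S))$, we have $\rho_p(f^m)=\rho_p(f)^m$. If $\rho_p(f)$ had finite order $k$, then $\rho_p(f^m)$ would satisfy $(\rho_p(f)^m)^k=1$, a contradiction. So $\rho_p(f)$ has infinite order for all primes $p\geq p_0$.

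The only point requiring any care is that finite order mapping classes have no pseudo-Anosov part, which ensures $f^m\neq 1$. Everything else is the elementary fact that a power of a finite order element has finite order.
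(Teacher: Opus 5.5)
Your proposal is correct and is essentially the paper's own argument: pass to a power $f^m\in[J_2(\Sigma_{g,n}),J_2(\Sigma_{g,n})]$, note that $f^m\neq 1$ because a mapping class with a pseudo-Anosov part has infinite order, and apply Theorem~\ref{thm:main}. Your last step, that $\rho_p(f)^m=\rho_p(f^m)$ having infinite order forces $\rho_p(f)$ to have infinite order, is left implicit in the paper's one-line proof.
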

The corollary follows from Theorem \ref{thm:main} by applying it to $f^d$ where $d$ is the order of $f$ in $\mathrm{Mod}(\Sigma_{g,n})/[J_2(\Sigma_{g,n}),J_2(\Sigma_{g,n})];$ note that $f^d$ is non-trivial as $f$ has a pseudo-Anosov part and hence infinite order. 


\begin{remark}\label{rk:torsionElem}
	  One may construct pseudo-Anosov elements of $\mathrm{Mod}(\Sigma_{g,n})$ that are covered by Corollary \ref{cor:main} but not by Theorem \ref{thm:main} as follows. Take $f$ which is either a non-trivial finite order element of $\mathrm{Mod}(\Sigma_{g,n}),$ or an element of $J_2(\Sigma_{g,n})$ representing non-trivial torsion in $H_1(J_2(\Sigma_{g,n}),\Z)$ (the latter exists by \cite{NSS22}, see also \cite{FM26}). From \cite[Theorem 1.3]{Wat20}, it is possible to find $\varphi \in [J_2(\Sigma_{g,n}),J_2(\Sigma_{g,n})]$ such that $f\varphi^k$ is a pseudo-Anosov element for large enough $k.$ Moreover, this element represents a non-trivial torsion element in $\mathrm{Mod}(\Sigma_{g,n})/[J_2(\Sigma_{g,n}),J_2(\Sigma_{g,n})].$
\end{remark}
Our techniques also yield the following:
\begin{thm}
	\label{thm:main2} Let $\Sigma_{g,n}$ be a compact oriented surface of genus $g$ and $n$ boundary components. Let $f\in J_2(\Sigma_{g,n})$ be non-trivial. Then for any  prime $p\geq 5$ large enough, $\rho_p(f)$ has either order $p$ or infinite order.
\end{thm}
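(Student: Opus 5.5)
Here is the plan. We work with the integral lattice of the $\mathrm{SO}_3$ TQFT, namely the Gilmer--Masbaum integral TQFT over $\Z[\zeta_p]$, and with its reduction modulo the prime ideal $(1-\zeta_p)$. Write $h=1-\zeta_p$, so that $p\mathcal{O}=(h)^{p-1}$ and $\mathcal{O}/(h)=\mathbb{F}_p$. Choose lifts so that $\rho_p(f)$ becomes a genuine matrix $A_f\in GL_N(\Z[\zeta_p])$; this is possible for Dehn twists after normalizing by roots of unity.

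The first step is to show that for $f\in J_2(\Sigma_{g,n})$ we have $A_f\equiv \mathrm{Id}\pmod{h^2}$, up to a root of unity. We can take generators to be separating Dehn twists $t_\gamma$. In a basis adapted to $\gamma$, the twist acts diagonally by powers of $\zeta_p$ of the form $\zeta_p^{c(c+2)/4}$ (twist eigenvalues), where $c$ is the color on $\gamma$. The Gilmer--Masbaum basis is a lattice basis given by $h$-adic expansions. Expanding $\zeta_p^{k}=(1-h)^k$, the first-order term $-kh$ should correspond to the action of $t_\gamma$ on the reduction of the integral TQFT modulo $h$, and this reduction is expected to be governed by the homology (symplectic) action. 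A separating twist acts trivially on homology, so the first-order term should vanish, and the twist is $\equiv\mathrm{Id}\pmod{h^2}$ after scaling. More precisely, I would aim to show that the integral lattice modulo $h^2$ carries a representation factoring through $\mathrm{Mod}/J_2$, or at least one in which separating twists act trivially. This is the step I expect to be the main obstacle. It needs an explicit understanding of the first-order reduction of the integral TQFT, for instance via the skein-theoretic description of Gilmer--Masbaum lattices and the fact that a separating curve bounds, so its colored skein element is congruent to a scalar modulo $h^2$.

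The second step is a commutator estimate. If $A,B\equiv\mathrm{Id}\pmod{h^2}$, write $A=\mathrm{Id}+h^2X$ and $B=\mathrm{Id}+h^2Y$. Then $[A,B]\equiv \mathrm{Id}+h^4[X,Y]\pmod{h^6}$, so $[A,B]\equiv\mathrm{Id}\pmod{h^4}$. Hence every $f\in[J_2,J_2]$ satisfies $A_f\equiv \mathrm{Id}\pmod{h^4}$, and the root-of-unity ambiguity cancels in commutators.

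The third step is a number-theoretic finite-order lemma. Suppose $A\equiv \mathrm{Id}\pmod{h^k}$ and $A$ has finite order $m$. Raising to the power coprime to $p$ in $m$ preserves the congruence and forces $A$ to be unipotent-free, so it reduces to the $p$-power case. If $A=\mathrm{Id}+h^kX$ with $X\not\equiv 0\pmod h$ and $k\ge 1$, the binomial expansion gives
\[
A^p=\mathrm{Id}+p h^kX+\dots+h^{kp}X^p .
\]
Since $v_h(p)=p-1$, the middle terms have valuation at least $p-1+k$, while the last term has valuation $kp$. Whenever $kp\neq p-1+k$, that is, whenever $k\neq 1$, the lowest-order term is nonzero, so $A^p\ne\mathrm{Id}$ and in fact $A^p\equiv\mathrm{Id}$ modulo exactly $h^{\min}$. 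Iterating this shows that no $p$-power of $A$ is the identity. Therefore a matrix $\equiv\mathrm{Id}\pmod{h^2}$ of finite order is the identity, up to the scalar ambiguity.

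Finally, to conclude the proof of Theorem~\ref{thm:main} we need $\rho_p(f)\neq\mathrm{Id}$ for large $p$. This follows from asymptotic faithfulness of quantum representations (Andersen; Freedman--Walker--Wang), applied to the nontrivial element $f$ modulo the center. For Theorem~\ref{thm:main2} the same argument with $k=2$ leaves room for $A^p$ being a scalar root of unity arising from the projective ambiguity, which yields order $p$ as the only finite possibility.
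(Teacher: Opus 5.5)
Your Step~1 is false, and the rest of the argument inherits the error. Separating Dehn twists lie in $J_2(\Sigma_{g,n})$. In a basis adapted to $\gamma$, $\rho_p(t_\gamma)$ is diagonal with twist eigenvalues that are $p$-th roots of unity and not all equal (colours $0$ and $2$ on $\gamma$ already give different eigenvalues once $p\geq 5$). So it is non-scalar of projective order exactly $p$, which is why the statement allows order $p$, as the paper notes right after Theorem~\ref{thm:main2}. If $\rho_p(t_\gamma)$ were congruent to a scalar modulo $h^2$, your own Step~3 (equivalently Proposition~\ref{prop:infiniteOrder} with Remark~\ref{rk:projective}) would force it to be projectively trivial or of infinite order. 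Hence no heuristic of the form ``separating twists act trivially on homology, so the $h$-linear term vanishes'' can work. The only available input is the Gilmer--Masbaum congruence $\rho_p(f)\equiv\mathrm{id}\pmod{h}$ for $f\in J_2$ (Proposition~\ref{prop:J_2}), and the order-$p$ behaviour lives precisely in the $h$-linear term. Your closing appeal to the ``projective ambiguity'' cannot recover order $p$ either: scalars are trivial in $\mathrm{PGL}$, while the order-$p$ images are genuinely non-scalar. Step~2 concerns Theorem~\ref{thm:main} and plays no role here.

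The repair is to run your valuation argument at level $k=1$, which it almost does already. Suppose $A=I+hX$ has finite order $p^a m'$ with $p\nmid m'$, and set $B=A^{p^a}$, of order $m'$. If $B\neq I$, write $B=I+h^jY$ with $j\geq1$ maximal. Then $B^{m'}\equiv I+m'h^jY\not\equiv I\pmod{h^{j+1}}$, since $m'$ is a unit modulo $h$, a contradiction; so $m'=1$. Moreover $A^p=I+\sum_{j\geq1}\binom{p}{j}h^jX^j\equiv I\pmod{h^p}$, so by your $k\geq2$ case $A^p$ is either $I$ or of infinite order. Hence any $A\neq I$ has order $p$ or infinite order. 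This is Proposition~\ref{prop:infiniteOrder2}; the paper proves it with norms rather than $h$-adic valuations, and your valuation route is a fine substitute.

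Two further pieces are missing from your plan:
\begin{itemize}
\item Proposition~\ref{prop:J_2} is only available for $n\leq1$. The paper handles general $n$ by capping each boundary component with a one-holed torus (Lemma~\ref{lem:fillingBoundComp}). This keeps $\hat f$ in $J_2(\widehat S)$ and preserves both triviality and order of the image.
\item The passage from projective to linear must be made precise, e.g.\ via the conjugation action on $M_d(\Z[\zeta_p])$. Under it, ``congruent to a scalar mod $h$'' becomes ``congruent to $I$ mod $h$'', and projective order becomes honest order.
\end{itemize}
With these in place, asymptotic faithfulness applied to $\hat f$ gives $\rho_p(\hat f)\neq\mathrm{id}$ for large $p$, as you say.
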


We note that the above theorem is optimal, in the sense that separating Dehn twists belong to $J_2(\Sigma_{g,n})$ and are well-known to have images of order $p$ by $\rho_p.$

The author wonders if one could prove a version of Theorem \ref{thm:main} for the third term $J_3(\Sigma_{g})$ of the Johnson filtration, since, according to Remark \ref{rk:pApart}, any non-trivial element of $J_3(\Sigma_{g})$ has a pseudo-Anosov part.

The proofs of Theorem \ref{thm:main} and \ref{thm:main2} rely on a number-theoretic criterion (Propositions \ref{prop:infiniteOrder} and \ref{prop:infiniteOrder2})for a matrix with entries in a cyclotomic ring to have infinite order, which we prove in Section \ref{sec:infiniteOrder}. In addition, we make use of the integrality of the WRT representations $\rho_p$ and the structure of their restriction to $J_2(\Sigma_{g,n}),$ which was studied in \cite{GM07}. Finally, we make use of the asymptotic faithfulness of the WRT representations, due to Andersen \cite{And06} and Freedman, Walker and Wang \cite{FWW02}.

\textbf{Acknowledgements:} The author thanks Pierre Godfard, Julien March\'e, Gregor Masbaum and Ramanujan Santharoubane for their interest and feedback on a preliminary version of this paper. The IMB, host institution of the author, receives support from the EIPHI Graduate School (contract ANR-17-EURE-0002).

\section{A number-theoretic criterion for elements of $GL_d(\Z[\zeta_p])$ to be of infinite order}
\label{sec:infiniteOrder}
In this section, let $p$ be a prime, and let $\Z[\zeta_p]$ be the ring of cyclotomic integers, where $\zeta_p$ is a primitive $p$-th root of unity.
The goal of this section will be the proof of the following criterion for a matrix $A\in GL_d(\Z[\zeta_p])$ to have infinite order:
\begin{prop}
	\label{prop:infiniteOrder} Let $d \geq 1,$ let $p$ be a prime, and let $A\in GL_d(\Z[\zeta_p])$ be such that $A\neq I_d$ and $A=I_d \mod (\zeta_p-1)^2.$ Then $A$ has infinite order.
\end{prop}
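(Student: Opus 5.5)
The plan is to follow the classical Minkowski-type argument on the valuation of $B-I_d$ at the prime above $p$. Let $\pi=\zeta_p-1$. I will use the standard facts that $(\pi)$ is a prime ideal of $\Z[\zeta_p]$, that $\Z[\zeta_p]/(\pi)\cong \mathbb{F}_p$, and that $p=u\,\pi^{p-1}$ for some unit $u$ (totally ramified prime). For $p=2$ this just reads $\pi=-2$. Let $v_\pi$ denote the $\pi$-adic valuation on $\Z[\zeta_p]$, extended to matrices by taking the minimum over the entries.

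Suppose for contradiction that $A$ has finite order $n>1$. Choose a prime $q\mid n$ and set $B=A^{n/q}$. Then $B\neq I_d$, $B^q=I_d$, and $B\equiv I_d \mod \pi^2$ still holds, since the matrices congruent to $I_d$ modulo $\pi^2$ form a group. Let $k=v_\pi(B-I_d)$, which is finite because $B\neq I_d$, and which satisfies $k\geq 2$ by hypothesis. Write $B=I_d+\pi^k M$, where $M$ has entries in $\Z[\zeta_p]$ and $M\not\equiv 0 \mod \pi$. The binomial expansion is valid since $I_d$ commutes with $M$:
$$B^q=I_d+q\pi^kM+\sum_{j=2}^{q}\binom{q}{j}\pi^{jk}M^j .$$

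If $q\neq p$, then $q$ is a unit modulo $\pi$, because the residue field has characteristic $p$. Every term with $j\geq 2$ lies in $\pi^{2k}\subset \pi^{k+1}$. Hence $B^q\equiv I_d+q\pi^kM\not\equiv I_d \mod \pi^{k+1}$, which contradicts $B^q=I_d$.

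If $q=p$, I compare valuations. The linear term $p\pi^kM=u\pi^{k+p-1}M$ has valuation exactly $k+p-1$. For $2\leq j\leq p-1$, the integer $\binom{p}{j}$ is divisible by $p$, so those terms have valuation at least $p-1+jk\geq p-1+2k>k+p-1$. The last term $\pi^{pk}M^p$ has valuation at least $pk$. The inequality $pk>k+p-1$ is equivalent to $(p-1)(k-1)>0$, which holds precisely because $k\geq 2$; this is the single place where the hypothesis modulo $(\zeta_p-1)^2$ rather than $(\zeta_p-1)$ is used. Therefore $B^p\equiv I_d+u\pi^{k+p-1}M \mod \pi^{k+p}$. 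Since $u$ is a unit and $M\not\equiv 0\mod \pi$, this shows $B^p\neq I_d$, again a contradiction.

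The only delicate step is the case $q=p$. There the top term $\pi^{pk}M^p$ could cancel the linear term when $k=1$; indeed $\zeta_p I_d$ shows the statement fails modulo $(\zeta_p-1)$ alone. The remaining ingredients are routine: the ramification fact $p\in \pi^{p-1}\Z[\zeta_p]^\times$, and the well-definedness of $k$ and $M$, which holds since $\Z[\zeta_p]$ is a Dedekind domain and $(\pi)$ is prime.
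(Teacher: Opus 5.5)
Your argument is correct, but it takes a different route from the paper.

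\textbf{The paper's route.} The paper works with eigenvalues. If $A=I_d+(\zeta_p-1)^2M$ had finite order, some eigenvalue $u_n\neq 1$ of $A$ would be a primitive $n$-th root of unity. Then $(u_n-1)/(\zeta_p-1)^2$, being an eigenvalue of $M$, is an algebraic integer. The paper compares the $p$-parts of the normalized norms: $N(\zeta_p-1)^2=p^{2/(p-1)}$, whereas the $p$-part of $N(u_n-1)$ is at most $p^{1/(p-1)}$ (computed from $\phi_n(1)$). This gives the contradiction.

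\textbf{Your route.} You run the classical Minkowski-type argument locally at the prime $(\pi)$. You pass to an element $B$ of prime order $q$ and record the exact level $k\geq 2$ with $B\equiv I_d \bmod \pi^k$. The binomial expansion then shows $B^q\not\equiv I_d$ modulo $\pi^{k+1}$ when $q\neq p$, and modulo $\pi^{k+p}$ when $q=p$. All your estimates check out, including $p=2$, where the middle range of $j$ is empty.

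\textbf{What your route buys.}
\begin{itemize}
\item It never leaves $\Z[\zeta_p]$: there is no diagonalization over $\mathbb{C}$ and no divisibility in $\overline{\Z}$ or $\N^{\Q_+}$.
\item It makes the threshold transparent. The condition $k\geq 2$ is exactly $k>e/(p-1)$ for the ramification index $e=p-1$ of $p$ in $\Z[\zeta_p]$, so the argument transfers verbatim to other rings of integers with a prime over $p$.
\item Your $q\neq p$ case only uses $k\geq 1$. Combined with the observation that $A\equiv I_d \bmod \pi$ forces $A^p\equiv I_d \bmod \pi^2$, it yields Proposition~\ref{prop:infiniteOrder2} as well.
\end{itemize}

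\textbf{What the paper's route buys.} You have to quote the standard ramification facts: $(\pi)$ is prime with residue field $\mathbb{F}_p$, and $p\in\pi^{p-1}\Z[\zeta_p]^{\times}$. The paper's proof is essentially the $1\times 1$ case applied to eigenvalues. The same arithmetic input is packaged in the elementary computation of $\phi_n(1)$, with no valuation bookkeeping on matrix entries.
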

The above criterion may be thought as a variation of the well-known lemma, due to Minkowski, that $GL_d(\Z/n\Z)$ is torsion-free for $n\geq 3$ (see \cite[Lemma 1]{Serre}).

To prove Proposition \ref{prop:infiniteOrder}, we will need a few elementary number theory facts. While much of the material of this section may be standard, we will recall all definitions and properties, for the sake of completeness.

We denote by $\N$ the set of positive integers, by $\Q_+$ and $\R_+$ the sets of positive rational and real numbers, and by $\overline{\Z}$ the ring of algebraic integers. We will denote by $\N^{\Q_+}$ the submonoid of $(\R_+,\times)$ generated by all elements of the form $n^q$ where $n\in \N$ and $q\in \Q_+.$
It follows from the fundamental theorem of arithmetics that any element  $x\in \N^{\Q_+}$ has a unique factorization
$$x=\underset{p \ \textrm{prime}}{\prod}p^{v_p(x)}$$
where $v_p(x)=0$ for all but finitely many primes $p$ and $v_p(x)\in \Q_+$ otherwise. 

\begin{remark}\label{rk:divisibility} For $x,y\in \N^{\Q_+},$ we say that $x$ divides $y$ if there exists $z\in \N^{\Q_+}$ such that $y=xz.$ It is immediate to see that $x$ divides $y$ if and only if $v_p(x)\leq v_p(y)$ for all primes $p.$
\end{remark}

For $z\in \overline{\Z}$ its degree $\deg(z)$ is the degree of the extension $\Q\subset \Q(z),$ the conjugates of $z$ are the roots of its minimal polynomial $\mu_z \in \Z[x].$ Equivalently, the set of conjugates of $z$ is the orbit of $z$ under $\mathrm{Gal}_{\overline{\Q}/\Q}.$  Finally we define its norm as follows:

\begin{definition}\label{def:norm} The (normalized) norm on $\overline{\Z}$ is the function
	$$N:\overline{\Z} \longrightarrow \N^{\Q_+} \cup \lbrace 0 \rbrace$$
	such that for any $z\in \overline{\Z},$ we have $$N(z)=\left(\underset{z_i \ \textrm{conjugate of} \ z}{\prod}|z_i| \right)^{\frac{1}{\deg(z)}}.$$
	\end{definition}

We note that the product of the conjugates is also sometimes refered to as the norm; our choice of normalization has the advantage that $N$ is multiplicative on $\overline{\Z}:$

\begin{prop}
	\label{prop:multNorm} For any $z\in \overline{\Z},$ and any finite extension $K$ of $\Q$ containing $z,$ we have
	$$N(z)=\left| \det(m_z) \right|^{\frac{1}{\dim_{\Q}(K)}},$$
	where $m_z: K\longrightarrow K$ is the $\Q$-linear operator of multiplication by $z.$ Moreover, for any $z,z'\in \overline{\Z},$ one has $N(zz')=N(z)N(z').$
\end{prop}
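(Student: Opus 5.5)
Here is the plan. The argument rests on the standard fact that the characteristic polynomial of $m_z$ over $\Q$ on $K$ equals $\mu_z^{[K:\Q(z)]}$.

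To prove this fact, I would use the tower $\Q\subset \Q(z)\subset K$. Choose a basis $1,z,\ldots,z^{e-1}$ of $\Q(z)$ over $\Q$, where $e=\deg(z)$, and a basis $b_1,\ldots,b_m$ of $K$ over $\Q(z)$, where $m=[K:\Q(z)]$. The products $z^i b_j$ then form a $\Q$-basis of $K$. Since multiplication by $z$ is $\Q(z)$-linear, each subspace $\Q(z)b_j$ is stable under $m_z$. On each such block, $m_z$ acts by the companion matrix of $\mu_z$, so $\det(m_z)=\det(C_{\mu_z})^m$. The determinant of the companion matrix is, up to sign, the constant term of $\mu_z$, namely $\pm\prod z_i$ over the conjugates $z_i$ of $z$. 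Hence
$$|\det(m_z)|=\Big(\prod_{z_i}|z_i|\Big)^{m}.$$
Raising to the power $1/\dim_\Q K=1/(me)$ gives exactly $\big(\prod|z_i|\big)^{1/e}=N(z)$. This proves the first formula. It also shows the value is independent of $K$.

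For multiplicativity, given $z,z'\in\overline{\Z}$, take $K=\Q(z,z')$, a finite extension containing $z$, $z'$ and $zz'$. On $K$ we have $m_{zz'}=m_z\circ m_{z'}$, so $\det(m_{zz'})=\det(m_z)\det(m_{z'})$. Applying the first part three times with the same $K$ gives $N(zz')=N(z)N(z')$. The case where $z$ or $z'$ is $0$ is trivial, since $m_0=0$ and $N(0)=0$.

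I would also note in passing that $N(z)\in\N^{\Q_+}\cup\{0\}$. Indeed, the constant term of $\mu_z$ is an integer because $z$ is an algebraic integer, so $\prod|z_i|\in\N\cup\{0\}$, and $N(z)$ is a rational power of it.

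The only step with real content is the block decomposition showing $\det(m_z)=(\pm\mu_z(0))^{[K:\Q(z)]}$. The rest is formal. The point to handle with care there is the $\Q(z)$-linearity of $m_z$, which makes the blocks $\Q(z)b_j$ invariant.
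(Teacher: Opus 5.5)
Your proposal is correct and follows the paper's proof: the same $\Q$-basis $\{z^i b_j\}$ from the tower $\Q\subset\Q(z)\subset K$, the same block-diagonal decomposition of $m_z$ into companion matrices of $\mu_z$ with $|\det C_{\mu_z}|=|\mu_z(0)|=\prod|z_i|$, and multiplicativity obtained by taking $K=\Q(z,z')$ and using $m_{zz'}=m_z m_{z'}$. Your extra remarks on the zero case and on $N(z)\in\N^{\Q_+}\cup\{0\}$ are harmless additions.
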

\begin{proof}
	The last claim follows from the first by setting $K=\Q(z,z'),$ since one has $m_{zz'}=m_zm_{z'}$ and the determinant is multiplicative.
	
	Let us prove the first claim. Let $d=\deg(z)$ and $\mu_z$ be the minimal polynomial of $z.$ Let also $u_1,\ldots,u_m$ be a basis of $K$ over $\Q(z).$ Then $$\mathcal{B}=\lbrace u_1,u_1z,\ldots,u_1z^{d-1},\ldots,u_m,\ldots,u_mz^{d-1}\rbrace$$ is a basis of $K$ over $\Q,$ and in this basis $m_z$ is block diagonal, with each block being the companion matrix $C_{\mu_z}$ of the monic polynomial $\mu_z.$ However
	$$|\det(C_{\mu_z})|=|\mu_z(0)|=\underset{z_i \ \textrm{conjugate of} \ z}{\prod}|z_i|.$$
	The claim follows.
\end{proof}

We  will now  compute the norm of some specific elements of cyclotomic rings. To start with, recall that the $n$-th cyclotomic polynomial is 
$$\phi_n(X)=\underset{k\leq n,\gcd(k,n)=1}{\prod}(X-e^{\frac{2ik\pi}{n}}).$$
It is well-known that $\phi_n\in \Z[X]$ is a monic irreducible polynomial, and is the minimal polynomial of any primitive $n$-th root of unity. The polynomials $\phi_n$ satisfy the recurrence relation:
$$X^n-1=\underset{d|n}{\prod}\phi_d(X).$$
 The degree of $\phi_n$ is given by $\varphi(n),$ the Euler totient function, which is multiplicative and satisfies $\varphi(p^k)=p^{k-1}(p-1)$ when $p$ is a prime.

\begin{lem}
 	\label{lemma:cyclotomNorm} Let $\zeta_n$ be a primitive $n$-th root of unity. Then $N(\zeta_n-1)=\phi_n(1)^{\frac{1}{\varphi(n)}}.$
\end{lem}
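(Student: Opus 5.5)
The plan is to identify the conjugates of $\zeta_n-1$ and evaluate the product of their absolute values directly. Since $\phi_n$ is irreducible, it is the minimal polynomial of $\zeta_n$, so $\deg(\zeta_n-1)=\deg(\zeta_n)=\varphi(n)$. The minimal polynomial of $\zeta_n-1$ is $\phi_n(X+1)$, which is monic with integer coefficients and irreducible because $\phi_n$ is. Its roots are therefore exactly the numbers $\omega-1$, where $\omega$ runs over the primitive $n$-th roots of unity $e^{2ik\pi/n}$ with $\gcd(k,n)=1$. Equivalently, the conjugates of $\zeta_n-1$ are the images $\sigma(\zeta_n)-1$ under $\mathrm{Gal}_{\overline{\Q}/\Q}$, and these are precisely $\omega-1$ for $\omega$ primitive.

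Next I would compute the product of the conjugates:
$$\prod_{\gcd(k,n)=1}\left|e^{\frac{2ik\pi}{n}}-1\right| = \left|\prod_{\gcd(k,n)=1}\left(1-e^{\frac{2ik\pi}{n}}\right)\right| = |\phi_n(1)|.$$
This uses the product formula for $\phi_n$ evaluated at $X=1$, together with the fact that $|a-1|=|1-a|$.

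It then remains to check that $\phi_n(1)>0$, so that the absolute value can be dropped. For $n=1$ we have $\phi_1(1)=0$, and the formula holds trivially, with both sides equal to $0$ and $\varphi(1)=1$. For $n\geq 2$, the non-real primitive roots come in complex-conjugate pairs, and each pair contributes $|1-\omega|^2>0$. The only real primitive root with $n\geq 2$ is $-1$, which occurs when $n=2$ and contributes $2$. Alternatively, one can note that $\phi_n(1)$ is a nonzero integer and that $\phi_n(x)>0$ for real $x>1$, so by continuity $\phi_n(1)\geq 0$. Either way $\phi_n(1)>0$, and taking the $\frac{1}{\varphi(n)}$-th power gives $N(\zeta_n-1)=\phi_n(1)^{1/\varphi(n)}$.

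As a variant, one can use Proposition \ref{prop:multNorm} with $K=\Q(\zeta_n)$: there $\det(m_{\zeta_n-1})$ equals, up to sign, the product over the embeddings of $\zeta_n-1$, which is again $\pm\phi_n(1)$. I don't expect a real obstacle. The only points needing care are identifying the conjugates via irreducibility of $\phi_n$, the sign of $\phi_n(1)$, and the degenerate case $n=1$.
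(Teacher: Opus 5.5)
Your proposal is correct and follows the same route as the paper. In both, the conjugates of $\zeta_n-1$ are identified as $\omega-1$ for $\omega$ ranging over the primitive $n$-th roots of unity, and the product of their absolute values is recognized as $\phi_n(1)$. Your extra checks that $\phi_n(1)>0$ (so the absolute value can be dropped) and of the degenerate case $n=1$ are details the paper leaves implicit.
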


\begin{proof}
	Without loss of generality, assume $\zeta_n=e^{\frac{2i\pi}{n}}.$ The conjugates of $\zeta_n-1$ are all $\zeta_n^k-1$ for $k\leq n$ coprime to $n.$ In particular, the degree of $\zeta_n-1$ is $\varphi(n),$ and moreover:
	$$N(\zeta_n-1)=\left( \underset{k\leq n, \gcd(k,n)=1}{\prod}|1-\zeta_n^k|\right)^{\frac{1}{\varphi(n)}}=\phi_n(1)^{\frac{1}{\varphi(n)}}.$$
\end{proof}
 
The following lemma will make the computation of norms in Lemma \ref{lemma:cyclotomNorm} more explicit:
\begin{lem}
	\label{lemma:cyclotomicPolAtOne} Let $n \geq 2,$ then 
	$$\phi_n(1)=\begin{cases}
	p \ \textrm{if} \ n=p^k \ \textrm{where} \ p \ \textrm{is prime}
	\\ 1 \ \textrm{else}
	\end{cases}.$$
\end{lem}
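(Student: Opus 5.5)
The plan is to compute $\phi_n(1)$ by evaluating at $X=1$ the identity obtained from the recurrence $X^n-1=\prod_{d\mid n}\phi_d(X)$ after cancelling the factor $\phi_1(X)=X-1$. Dividing by $X-1$ gives the polynomial identity
$$1+X+\cdots+X^{n-1}=\prod_{d\mid n,\ d>1}\phi_d(X),$$
and setting $X=1$ yields, for every $n\geq 2,$
$$n=\prod_{d\mid n,\ d>1}\phi_d(1).$$
Since every $\phi_d$ is a monic polynomial with integer coefficients, each $\phi_d(1)$ is an integer. It is also positive: for $d\geq 2$, the non-real roots of $\phi_d$ come in complex conjugate pairs, and the only real primitive $d$-th root of unity is $-1$ (when $d=2$). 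So $\phi_d(1)$ is a product of terms $|1-\zeta|^2>0$, possibly times $1-(-1)=2$.

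I then argue by strong induction on $n\geq 2$. For $n=2$, $\phi_2(1)=2$, which matches the claim. For general $n$, the product formula gives
$$\phi_n(1)=\frac{n}{\prod_{d\mid n,\ 1<d<n}\phi_d(1)},$$
and the induction hypothesis determines every factor in the denominator. There are two cases.

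\textbf{Prime power case.} If $n=p^k$, the proper divisors $d>1$ are $p,p^2,\ldots,p^{k-1}$. Each contributes $p$, so the denominator is $p^{k-1}$ and $\phi_n(1)=p$.

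\textbf{Several prime factors.} If $n=\prod_{i} p_i^{a_i}$ has at least two distinct prime factors, then among the divisors $1<d<n$ the prime powers $p_i^{j}$ with $1\leq j\leq a_i$ all occur. Here it matters that $n$ is not a prime power, so that $p_i^{a_i}<n$. These prime powers contribute $\prod_i p_i^{a_i}=n$ to the denominator. All other divisors $d$ in this range are not prime powers, so by induction they contribute $1$. Hence $\phi_n(1)=n/n=1$.

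The only mildly delicate point is the bookkeeping in the last case: checking that every prime-power divisor, including $p_i^{a_i}$ itself, is a proper divisor of $n$. Alternatively, one could work directly with the closed form $\phi_{p^k}(X)=\phi_p(X^{p^{k-1}})=1+X^{p^{k-1}}+\cdots+X^{(p-1)p^{k-1}}$ for the prime-power case, and use the induction only for the remaining case.
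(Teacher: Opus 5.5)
Your proof is correct and follows the same strong induction as the paper: divide $X^n-1=\prod_{d\mid n}\phi_d(X)$ by $X-1$, evaluate at $X=1$, and split into the prime-power and non-prime-power cases. Your bookkeeping is slightly cleaner than the paper's, which says ``composite'' where it means ``not a prime power''; your positivity remark is harmless but unnecessary, since the induction hypothesis already shows the denominator is nonzero.
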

\begin{proof}
	We prove the claim by induction on $n.$ The claim is clear for $n=2$ since $\phi_2(X)=X+1.$ Assume the claim true for all $m<n.$ We have
	$$\phi_n(X)=\frac{X^n-1}{(X-1) \underset{d|n, 1<d<n}{\prod} \phi_d(X)}.$$
	Since $\frac{X^n-1}{X-1}=1+X+\ldots +X^{n-1},$ evaluating at $1$ we get:
	$$\phi_n(1)=\frac{n}{\underset{d|n, 1<d<n}{\prod} \phi_d(X)}.$$
	If $n=p^k,$ by the induction hypothesis, the strict divisors of $n$ each contribute a factor $p$ to the denominator and we get $\phi_{p^k}(1)=p.$ If however $n$ is composite, the prime power divisors of $n$ are all strict divisors of $n$ and we get $\phi_n(1)=1$ by the induction hypothesis.
\end{proof}
We are now ready to prove Proposition \ref{prop:infiniteOrder}:

\begin{proof}[Proof of Proposition \ref{prop:infiniteOrder}]
	Let $d\geq 1,$ let $p$ be a prime and let $A\in GL_d(\Z[\zeta_p])$ such that $A\neq I_d$ but $A=I_d \mod (\zeta_p-1)^2.$
	
	Let us write $A=I_d +(\zeta_p-1)^2 M$ where $M\in M_d(\Z[\zeta_p]).$ Assume for a contradiction that $A$ has finite order $m\geq 2.$ Then $A$ has an eigenvalue $u_n$ which is a primitive $n$-th root for some $n\geq 2$ dividing $m.$ Therefore, $M$ admits $\frac{u_n-1}{(\zeta_p-1)^2}$ as an eigenvalue. However, since $M\in M_d(\Z[\zeta_p]),$ all its eigenvalues are algebraic integers. Therefore, $(\zeta_p-1)^2$ divides $(u_n-1)$ in $\overline{\Z},$ and thus by Proposition \ref{prop:multNorm}, we have that $N(\zeta_p-1)^2$ divides $N(u_n-1).$
	
	By Lemma \ref{lemma:cyclotomNorm} and \ref{lemma:cyclotomicPolAtOne}, we have that $N(\zeta_p-1)^2=p^{\frac{2}{p-1}}$ while $v_p(N(u_n-1))=0$ unless $n=p^k$ is a power of $p,$ in which case
	$$N(u_{p^k}-1)=p^{\frac{1}{p^{k-1}(p-1)}}.$$
	Therefore $N(\zeta_p-1)^2$ can not divide $N(u_n-1)$ and we have a contradiction. 
	
	Thus $A$ has infinite order.
\end{proof}

Finally, when we weaken slightly the hypothesis of Proposition \ref{prop:infiniteOrder}, we get the following:

\begin{prop}
	\label{prop:infiniteOrder2} Let $d\geq 1,$ let $p$ be a prime and let $A\in GL_d(\Z[\zeta_p])$ be such that $A\neq I_d$ but $A=I_d \mod (\zeta_p-1).$ Then $A$ either has order $p,$ or has infinite order.
\end{prop}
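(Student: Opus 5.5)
We argue as in the proof of Proposition \ref{prop:infiniteOrder}, but this time we extract information about \emph{every} eigenvalue of $A$ rather than deriving an outright contradiction. Suppose $A$ has finite order $m$; we show that $m=p$. Write $A=I_d+(\zeta_p-1)M$ with $M\in M_d(\Z[\zeta_p])$.

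Since $A^m=I_d$, every eigenvalue $u$ of $A$ is a root of unity; say $u$ is a primitive $n$-th root of unity with $n\mid m$. The number $\frac{u-1}{\zeta_p-1}$ is an eigenvalue of $M$, hence an algebraic integer, so $\zeta_p-1$ divides $u-1$ in $\overline{\Z}$. If $u\neq 1$, then Proposition \ref{prop:multNorm} shows that $N(\zeta_p-1)=p^{\frac{1}{p-1}}$ divides $N(u-1)$ in $\N^{\Q_+}$. By Lemmas \ref{lemma:cyclotomNorm} and \ref{lemma:cyclotomicPolAtOne}, $v_p(N(u-1))$ is $0$ unless $n=p^k$ with $k\ge1$. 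In that case
$$v_p(N(u-1))=\frac{1}{p^{k-1}(p-1)},$$
and by Remark \ref{rk:divisibility} this must be at least $\frac{1}{p-1}$, which forces $k=1$. Hence every eigenvalue of $A$ is a $p$-th root of unity; the case $u=1$ is trivially included. This is the same norm comparison as before, except that one factor of $(\zeta_p-1)$ now leaves exactly the room for $k=1$.

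Next we use that a matrix of finite order is diagonalizable over $\mathbb{C}$, because its minimal polynomial divides $X^m-1$, which has simple roots. Thus $A$ is conjugate to a diagonal matrix whose entries are $p$-th roots of unity, so $A^p=I_d$. Since $A\neq I_d$ and $p$ is prime, the order of $A$ is exactly $p$. Therefore $A$ has either order $p$ or infinite order.

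The only delicate point is the divisibility step. Divisibility of norms is only meaningful when $u-1\neq0$, since $N(0)=0$ lies outside $\N^{\Q_+}$, so the eigenvalue $u=1$ has to be set aside separately, as done above. Beyond that, everything reduces to the earlier lemmas.
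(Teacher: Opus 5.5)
Your proof is correct and follows essentially the same route as the paper: write $A=I_d+(\zeta_p-1)M$, use that $\frac{u-1}{\zeta_p-1}$ is an algebraic integer for each eigenvalue $u\neq 1$, and compare $p$-adic valuations of normalized norms to force $u$ to be a primitive $p$-th root of unity. The only difference is bookkeeping. You apply the bound to every eigenvalue and then invoke diagonalizability explicitly to get $A^p=I_d$. The paper instead picks, for each prime power dividing the order $m$, an eigenvalue whose order is a multiple of it, which silently relies on the same diagonalizability.
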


\begin{proof}
	As in the proof of Proposition \ref{prop:infiniteOrder}, set $A=I_d+(\zeta_p-1)M.$ If $A$ has finite order $m,$ then $M$ has an eigenvalue of the form $\frac{u_n-1}{\zeta_p-1}$ where $u_n$ is a primitive $n$-th root of unity for some $n\geq 2$ dividing $m.$ In fact, $n$ can be chosen to be a multiple of any prime power divisor of $m.$ By Lemma \ref{lemma:cyclotomNorm} and \ref{lemma:cyclotomicPolAtOne}, we get $N(\zeta_p-1)$ does not divide $N(u_n-1)$ unless $n=p,$ which means that $A$ has order exactly $p.$
	
	Therefore, $A$ either has order $p$ or infinite order.
\end{proof}

\begin{remark}
	\label{rk:projective} While we have stated Proposition \ref{prop:infiniteOrder} and \ref{prop:infiniteOrder2} for $A\in GL_d(\Z[\zeta_p]),$ they apply equally as well for $A\in PGL_d(\Z[\zeta_p]).$ One way to see this is to embed $PGL_d(\Z[\zeta_p])$ into $GL_{d^2}(\Z[\zeta_p])$ via the conjugation action on $M_d(\Z[\zeta_p]).$
\end{remark}

\section{The $h$-adic expansion and the AMU conjecture}
\label{sec:h-adic}
In this section, we let $\rho_p$ be the Witten-Reshetikhin-Turaev $\mathrm{SO}_3$-quantum representation of the mapping class group $\mathrm{Mod}(\Sigma_{g,n})$ of a compact oriented surface $\Sigma_{g,n}$ of genus $g$ and $n$ boundary components. We refer to \cite{BHMV} and \cite{GM07} for a definition. Following \cite{GM07}, the quantum representation $\rho_p$ is a projective representation:

$$\rho_p:\mathrm{Mod}(\Sigma_{g,n})\longrightarrow \mathrm{PAut}(\mathcal{S}_p(\Sigma_{g,n}))$$
where $\mathcal{S}_p(\Sigma_{g,n})$ is a free $\Z[\zeta_p]$-module of finite rank. The rank of $\mathcal{S}_p(\Sigma_{g,n})$ may be explicitely computed using the Verlinde formula \cite[Corollary 1.16]{BHMV}. We note that the module $\mathcal{S}_p(\Sigma_{g,n})$ breaks into direct summands $\mathcal{S}_p(\Sigma_{g,n},c),$ where $c:|\partial \Sigma_{g,n}|\longrightarrow \lbrace 0,2 \ldots, p-3 \rbrace,$ is a coloration of the boundary components of $\Sigma_{g,n}.$ Moreover, $\rho_p$ lifts to a linear representation of a central extension of $\mathrm{Mod}(\Sigma_{g,n}),$ with coefficients in a cyclotomic ring $\Z[\zeta_p]$ or $\Z[\zeta_{4p}]$ depending on $p \mod 4;$ however as a projective representation it may always be considered to have $\Z[\zeta_p]$-coefficients. 

We recall two important facts about these representations that will be used in our proof of Theorem \ref{thm:main}.

\begin{prop}
	\label{prop:J_2} \cite{GM07} For compact oriented surface $\Sigma_{g,n}$ with $n\leq 1,$ and for any $f\in J_2(\Sigma_{g,n}),$ one has $$\rho_p(f)=id_{\mathcal{S}_p(\Sigma_{g,n})} \mod (\zeta_p-1).$$
\end{prop}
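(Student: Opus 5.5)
Since $J_2(\Sigma_{g,n})$ is generated by Dehn twists along separating curves, and since the condition $\rho_p(f)=\mathrm{id} \mod (\zeta_p-1)$ is preserved under products and inverses (the set of such automorphisms is a subgroup of $\mathrm{PAut}(\mathcal{S}_p(\Sigma_{g,n}))$, being the kernel of reduction modulo the ideal $(\zeta_p-1)$), it suffices to check the claim on a single Dehn twist $T_\gamma$ along a separating curve $\gamma$. So the plan is: reduce to generators, then compute $\rho_p(T_\gamma)$ explicitly in an adapted basis.

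For the computation, pick a pants decomposition of $\Sigma_{g,n}$ containing $\gamma$. Because $n\le 1$ and $\gamma$ is separating, $\gamma$ cuts off a subsurface with no boundary colors other than the one on $\gamma$ (or a single boundary). In the associated colored trivalent graph basis (or, over $\Z[\zeta_p]$, the integral Gilmer--Masbaum lattice basis, which is a triangular modification of it along $\gamma$-compatible vectors), $\rho_p(T_\gamma)$ acts diagonally on each vector by the twist coefficient $\mu_c=(-1)^c A^{c^2+2c}$ determined by the color $c$ of $\gamma$, with $A$ a suitable root of unity of order $p$ or $2p$. The key point is that a separating curve can only carry \emph{even} colors (parity/fusion constraints on each side, since the two sides have even total boundary color), so $\mu_c=A^{c(c+2)}$ with $c$ even, which is a power of $\zeta_p$. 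Every power of $\zeta_p$ is $\equiv 1 \mod (\zeta_p-1)$, giving $\rho_p(T_\gamma)\equiv \mathrm{id}$ up to the projective scalar, as required.

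The main obstacle is that the orthogonal colored-graph basis is only a $\Q(\zeta_p)$-basis, not a basis of the integral lattice $\mathcal{S}_p(\Sigma_{g,n})$; a diagonal matrix congruent to the identity in a non-integral basis need not be congruent to the identity on the lattice. I would handle this by using the Gilmer--Masbaum description: near $\gamma$, the lattice is spanned by vectors obtained from the colored ones by inserting Jones--Wenzl idempotent-type elements $v_k$ on a neighborhood of $\gamma$, and $T_\gamma$ acts on the span of these as multiplication by a polynomial in the skein element of $\gamma$. Alternatively, note $\rho_p(T_\gamma)$ has finite order $p$ with eigenvalues powers of $\zeta_p$ and preserves the lattice, so $\rho_p(T_\gamma)-\mathrm{id}$ is a lattice endomorphism whose eigenvalues lie in $(\zeta_p-1)$; combined with the fact that $T_\gamma$ is a polynomial $P(z_\gamma)$ in the (lattice-preserving) curve operator $z_\gamma$ with $P(\zeta_p=1)$ specializing to the identity, one concludes $\rho_p(T_\gamma)-\mathrm{id}\in(\zeta_p-1)\mathrm{End}(\mathcal{S}_p)$. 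Checking this specialization carefully is the step I expect to require the most care.
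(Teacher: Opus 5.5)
There is a genuine gap. Your reduction to a single separating twist $T_\gamma$ is fine, and it matches how the paper combines \cite[Remark 14.3]{GM07} with the generation of $J_2$ by separating twists. What is missing is the decisive step: showing $\rho_p(T_\gamma)\equiv\mathrm{id}$ on the lattice $\mathcal{S}_p$.

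Your fallback arguments cannot supply this step, because they never use that $\gamma$ is separating, nor that $n\leq 1$. In the $\mathrm{SO}_3$ theory at odd $p$, \emph{every} curve of a pants decomposition carries only even colors. So a non-separating twist also has eigenvalues $\mu_c=A^{c(c+2)}$ that are powers of $\zeta_p$, and your reasoning applies to it verbatim. But non-separating twists are \emph{not} congruent to the identity mod $(\zeta_p-1)$. For $g\geq 3$, $\mathrm{Mod}(\Sigma_g)$ is generated by non-separating twists and is perfect. So it would act trivially mod $(\zeta_p-1)^2$, and Proposition \ref{prop:infiniteOrder} with Remark \ref{rk:projective} would force $\rho_p(T_\gamma)$ to have infinite order, whereas it has order $p$.

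Concretely, both of your claims fail:
\begin{itemize}
\item A lattice endomorphism whose eigenvalues lie in $(\zeta_p-1)$ need not lie in $(\zeta_p-1)\mathrm{End}(\mathcal{S}_p)$. It can be nonzero and nilpotent mod $(\zeta_p-1)$, which is exactly what happens for non-separating twists.
\item There is no $P\in\Z[\zeta_p][x]$ with $P(z_\gamma)=\rho_p(T_\gamma)$, so ``specializing at $\zeta_p=1$'' has no meaning. With $q=A^2$, the eigenvalues $\lambda_c=-(q^{c+1}+q^{-c-1})$ of $z_\gamma$ satisfy $\lambda_c-\lambda_{c'}=-(q^{c+1}-q^{c'+1})(1-q^{-c-c'-2})$. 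For distinct even $c,c'\leq p-3$ this has $(\zeta_p-1)$-adic valuation $2$, whereas $\mu_c-\mu_{c'}$ has valuation exactly $1$. An integral $P$ would force $\lambda_c-\lambda_{c'}$ to divide $P(\lambda_c)-P(\lambda_{c'})$, which is impossible.
\end{itemize}
So the interpolating polynomial has denominators. Whether $P(z_\gamma)$ is congruent to the identity depends on how the lattice sits relative to $z_\gamma$, which is exactly where separating and non-separating curves differ. Your option (a) only restates the setting and draws no conclusion.

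What you need is a genuine $\Z[\zeta_p]$-basis of $\mathcal{S}_p(\Sigma_{g,n})$ on which $T_\gamma$ acts diagonally. Your parenthetical asserts this, but it is the whole content of the statement, and it is false for non-separating $\gamma$. The paper does not reprove it; it relies on \cite[Remark 14.3]{GM07}, that is, on Gilmer--Masbaum's explicit integral basis adapted to a lollipop tree. In that basis, the curves dual to tree edges are separating and those edges still carry ordinary colors. Hence the corresponding twists are diagonal on an integral basis with eigenvalues $\mu_c\equiv 1$ (projectively). One then uses two facts. First, every separating curve with all boundary components on one side is dual to a tree edge for a suitable lollipop tree, equivalently $\mathrm{Mod}$-equivalent to such a curve. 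That boundary condition is automatic when $n\leq 1$ (cf.\ Remark \ref{rk:nLessThanOne}), and it is where the hypothesis $n\leq 1$ really enters. Second, conjugating by the lattice automorphism $\rho_p(\phi)$ preserves congruence to the identity.
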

\begin{remark}
	\label{rk:nLessThanOne} The restriction $n\leq 1$ may be explained as follows. By \cite[Remark 14.3]{GM07}, Dehn twists along separating curves in $\Sigma_{g,n}$ that have all the boundary components of $\Sigma_{g,n}$ on one side have images that are $=id_{S_p(\Sigma_{g,n})} \mod (\zeta_p-1).$ 
	
	One may notice from the description of the integral basis in \cite{GM07} that this is still true for separating curves that separate a $m$-holed sphere $\Sigma_{0,m}$ with $m\leq n+1$ on one side, but these twists are still not enough to generate $J_2(\Sigma_{g,n})$ if $n\geq 2$ and $g\geq 2.$ 
	
	Indeed, by \cite[Theorem A and B]{Boggi},  $J_2(\Sigma_{g,n})$ is generated by Dehn twists that separate a surface of the type $\Sigma_{2,1},\Sigma_{1,1},\Sigma_{1,2}$ or $\Sigma_{0,3}$ on one side; moreover, the latter is often a minimal normal generating set.
\end{remark}

The second ingredient will be the asymptotic faithfulness of $\mathrm{SO}_3$-WRT quantum representations which we state in the following way:

\begin{thm}\cite{And06}\cite{FWW02}
	
	\label{thm:asymptFaithful} For any compact oriented surface $\Sigma_{g,n},$ one has 
	$$\underset{p \geq 5, \ \textrm{prime}}{\bigcap} \ker \rho_p=Z(\mathrm{Mod}(\Sigma_{g,n})).$$
\end{thm}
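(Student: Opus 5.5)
The plan is to follow the Freedman--Walker--Wang strategy, which only needs $p\to\infty$ along \emph{some} sequence, so restricting to primes costs nothing. One inclusion is easy: central elements of $\mathrm{Mod}(\Sigma_{g,n})$ (Dehn twists along boundary-parallel curves, and the hyperelliptic involution in the few low-complexity cases) act by scalars on each summand $\mathcal{S}_p(\Sigma_{g,n},c)$. For boundary twists this holds because the boundary colour is fixed. For the hyperelliptic involution it follows from checking on a pants-decomposition basis that it acts trivially, up to a global scalar. Hence central elements lie in every $\ker\rho_p$.

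For the converse, take $f$ non-central. The first step is purely topological: find a simple closed curve $\gamma$ such that $f(\gamma)$ is not isotopic to $\gamma$. This is a standard fact for non-central mapping classes, via the Alexander method: an element fixing all isotopy classes of curves is central.

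The second step uses naturality of the TQFT. The curve operator $T_p(\gamma)$, given by multiplication by $\gamma$ in the Kauffman bracket skein module at $\zeta_p$, satisfies
$$\rho_p(f)\,T_p(\gamma)\,\rho_p(f)^{-1}=T_p(f(\gamma)).$$
If $\rho_p(f)$ were scalar for infinitely many primes $p$, we would get $T_p(\gamma)=T_p(f(\gamma))$ for all those $p$. It therefore suffices to show that curve operators of non-isotopic curves differ for all large $p$.

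The third step, which I expect to be the main obstacle, is to prove $T_p(\gamma)\neq T_p(\delta)$ for non-isotopic $\gamma,\delta$ and $p$ large. I would try to do this concretely.
\begin{enumerate}[(a)]
\item If $\gamma$ and $\delta$ are disjoint and non-isotopic, complete them to a pants decomposition and use the associated basis of colourings, in which both operators are diagonal. Their eigenvalues $-\zeta^{c_\gamma+1}-\zeta^{-c_\gamma-1}$ and $-\zeta^{c_\delta+1}-\zeta^{-c_\delta-1}$ can be made different by choosing an admissible colouring with $c_\gamma\neq c_\delta$, which is possible once $p$ is large.
\item If $i(\gamma,\delta)>0$, take a pants decomposition containing $\gamma$. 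Here $T_p(\gamma)$ is diagonal, while the fusion rules (via the standard formulas for a curve meeting the decomposition) show that $T_p(\delta)$ has a nonzero off-diagonal entry on some admissible colouring. That colouring exists once $p$ exceeds a bound depending on the intersection pattern, so $T_p(\delta)$ is not diagonal in this basis and cannot equal $T_p(\gamma)$.
\end{enumerate}
Should the explicit off-diagonal computation become unwieldy, I would fall back on Andersen's asymptotic route: matrix coefficients of $T_p(\gamma)$ converge, as $p\to\infty$, to those of Toeplitz operators with symbol the trace function $\mathrm{tr}_\gamma$ on the $\mathrm{SU}(2)$ moduli space. Since $\mathrm{tr}_\gamma\neq\mathrm{tr}_\delta$ for non-isotopic curves, the operators must differ for large $p$.
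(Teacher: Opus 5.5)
\paragraph{Overview.} The paper does not prove this statement. It quotes it from \cite{And06} and \cite{FWW02} and uses it as a black box, together with the remark that those proofs give $\rho_p(f)\neq \mathrm{id}$ for \emph{all} large primes $p$. Your outline is essentially the Freedman--Walker--Wang strategy, and if completed it would give that stronger form.

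Steps 1, 2 and 3(a) are sound.
\begin{itemize}
\item In Step 1 you do not need the Alexander method: $fT_cf^{-1}=T_{f(c)}$, and Dehn twists generate $\mathrm{Mod}(\Sigma_{g,n})$.
\item In 3(a), take every colour (boundary colours included) equal to $2$ except $c_\gamma=4$. This colouring is admissible for $p\geq 7$, and distinct even colours give distinct eigenvalues, so $T_p(\gamma)\neq T_p(\delta)$.
\end{itemize}

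\paragraph{The gap: Step 3(b).} This step carries the entire content of asymptotic faithfulness, and you only assert it. Nothing in your argument rules out that $T_p(\delta)$ preserves the colour of $\gamma$, i.e.\ commutes with $T_p(\gamma)$. Excluding this is exactly what has to be proved.

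Concretely, put $\delta$ in minimal position with respect to the pants decomposition $\{\gamma_e\}$ containing $\gamma$, and let $k_e=i(\delta,\gamma_e)$, so $k_\gamma>0$. You must show that for some admissible colouring $c$, the coefficient of $u_{c+\mathbf{k}}$ in $T_p(\delta)u_c$ is nonzero.
\begin{itemize}
\item The colour of $\gamma_e$ can rise by $k_e$ only if each of the $k_e$ parallel strands of $\delta$ raises it. So this coefficient comes from a single fusion channel, and no cancellation occurs.
\item You still have to evaluate the resulting local networks in each pair of pants as products of powers of $A$ and ratios of quantum factorials. This includes arcs of $\delta$ that enter and leave a pair of pants through the same curve, and the twisting of $\delta$ around the pants curves.
\item You must then check that none of these factors vanishes for colourings that are admissible with room to spare: all $c_e$ large compared with the $k_e$, and $c_e+k_e$ well below $p$.
\item The $\mathrm{SO}(3)$ theory adds a further point. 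When $k_e$ is odd, the top colour is odd and must be converted via $c\mapsto p-2-c$. You must check that this creates no collision with the other terms, or else work with $\delta$ coloured $2$.
\end{itemize}

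\paragraph{The fallback.} Your fallback does not close the gap either. It is Andersen's theorem itself, proved for $\mathrm{SU}(n)$ through Toeplitz operator asymptotics. Using it here also requires two further inputs: a transfer from the $\mathrm{SU}(2)$ theory to the $\mathrm{SO}(3)$ theory at prime $p$, and the separate fact that trace functions of non-isotopic simple closed curves differ on the moduli space.
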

In fact, the proofs of Theorem \ref{thm:asymptFaithful} even show that for a given $f\notin Z(\mathrm{Mod}(\Sigma_{g,n})),$ for any large enough prime $p\geq 5,$  one has $\rho_p(f)\neq id.$

Note that $Z(\mathrm{Mod}(\Sigma_{g,n}))$ is spanned by Dehn twists along the boundary components, unless if $(g,n)=(1,0)$ or $(2,0),$ in which case it is generated by the elliptic involution or hyperinvolution, which has order $2.$ In any case, the center never contains any element that has a pseudo-Anosov part.

Finally, we need the following lemma:

\begin{lem}
	\label{lem:fillingBoundComp} Let $S=\Sigma_{g,n}$ be a compact oriented surface and let $\widehat{S}$ be obtained from $S$ by gluing a one-holed torus on each boundary component of $S.$ Let $f\in \mathrm{Mod}(S)$ and let $\hat{f}$ be the mapping class obtained by extending $f$ to $\widehat{S}$ by the identity. Then for any odd $p \geq 5:$
	$$f\notin \ker \rho_p \Longleftrightarrow \hat{f} \notin \ker \rho_p.$$
\end{lem}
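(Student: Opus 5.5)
We will prove the lemma using the TQFT gluing (splitting) formula for the modules $\mathcal{S}_p$ (or, rationally, the spaces $RT_p$). Write $T=\Sigma_{1,1}$ for the one-holed torus. Cutting $\widehat{S}$ along the $n$ curves $\partial S$ gives a decomposition
$$RT_p(\widehat{S}) \cong \bigoplus_{c:|\partial S|\to I_p} RT_p(S,c)\otimes \bigotimes_{j=1}^n RT_p(T,c(j)).$$
Since $\hat f$ is supported on $S$ and is the identity on each glued torus, it acts on each summand by $\rho_{p,c}(f)\otimes \mathrm{id}$. This holds up to the global projective scalar, and the identification is equivariant because the gluing axiom is natural with respect to mapping classes supported away from the cutting curves.

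The key step is to check that $RT_p(T,c)\neq 0$ for every even color $c\in\{0,2,\ldots,p-3\}$. For the one-holed torus with boundary color $c$, a basis is given by colorings $a$ of the meridian core such that the triple $(a,a,c)$ is $p$-admissible. For any even $c\le p-3$, the choice $a=c/2$ (or an appropriate nearby color respecting the parity/level constraints of the $\mathrm{SO}_3$ theory) gives an admissible triple. So every summand $RT_p(S,c)$ appears in $RT_p(\widehat S)$ tensored with a nonzero space.

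With this, the equivalence follows by linear algebra. Suppose first that $\rho_p(f)$ is projectively trivial, meaning there is a single scalar $\lambda$ with $\rho_{p,c}(f)=\lambda\,\mathrm{id}$ for all $c$. Then $\rho_p(\hat f)=\lambda\,\mathrm{id}$, so $\hat f\in\ker\rho_p$. Conversely, suppose $\rho_p(\hat f)$ is scalar, say $\mu\,\mathrm{id}$. Then $\rho_{p,c}(f)\otimes \mathrm{id}_{W_c}=\mu\,\mathrm{id}$ with $W_c\neq0$, which forces $\rho_{p,c}(f)=\mu\,\mathrm{id}$ for every $c$ with the same $\mu$. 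Hence $f\in\ker\rho_p$, where $\rho_p$ on $S$ is understood as the direct sum over colorings, as in the paper's convention.

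The main obstacle is bookkeeping rather than concept. We must make sure the projective ambiguities match: the lifts to the central extension and the framing anomaly should contribute a common scalar across all summands. We must also handle the subtlety that the kernel of the direct-sum projective representation means a common scalar on all summands, which is exactly what the argument above produces.
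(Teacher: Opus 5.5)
Your proof is correct and follows the paper's argument: the monoidal splitting $RT_p(\widehat S)\cong\bigoplus_c RT_p(S,c)\otimes\bigotimes_x RT_p(T_x,c(x))$ with $\rho_p(\hat f)=\bigoplus_c\rho_{p,c}(f)\otimes\mathrm{id}$. You also make explicit two points the paper leaves implicit: each $RT_p(T,c)$ is nonzero, and triviality in $\mathrm{PAut}$ of the direct sum means a common scalar on all summands, which is what makes $f\in\ker\rho_p\Rightarrow\hat f\in\ker\rho_p$ hold. For the first point, since $\mathrm{SO}_3$ colors are even, $a=c/2$ only works when $4\mid c$; take $a$ to be whichever of $c/2$, $c/2+1$ is even, which gives $2a+c\le 2c+2\le 2p-4$.
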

\begin{proof}
	We denote by $|\partial S|$ the set of boundary components of $S.$
	By the monoidality of the $RT_p$-TQFTs \cite{BHMV}, the vector space $RT_p(\widehat{S})$ may be decomposed as
	$$RT_p(\widehat{S})=\underset{c:|\partial S|\longrightarrow I_p }{\bigoplus} RT_p(S,c) \otimes \underset{x\in |\partial S|}{\bigotimes} RT_p(T_x,c(x)),$$
	where $T_x$ is the one-holed torus that is glued onto $x,$ and moreover 
	$$\rho_p(\hat{f})=\underset{c:|\partial S|\longrightarrow I_p }{\bigoplus} \rho_{p,c}(f) \otimes\underset{x\in |\partial S|}{\bigotimes} id_{RT_p(T_x,c(x))},$$
	where we recall that $I_p=\lbrace 0,2,\ldots,p-3\rbrace$ is the set of colors for the $\mathrm{SO}_3$-TQFT at level $p.$ The lemma follows.
\end{proof}
We can now prove our main Theorem:

\begin{proof}[Proof of Theorem \ref{thm:main}]
	Let $S=\Sigma_{g,n}$ be a compact oriented surface and let $f\in [J_2(S),J_2(S)].$ As in Lemma \ref{lem:fillingBoundComp}, let $\widehat{S}$ be the surface obtained by gluing a one-torus on each boundary component of $S,$ and let $\hat{f}$ be obtained by extending $f$ by the identity. By Lemma \ref{lem:fillingBoundComp}, if $\rho_p(f)$ has infinite order if and only if $\rho_p(\hat{f})$ has. Note that $J_2(S)\subset J_2(\widehat{S}).$ Indeed, an element $g\in J_2(S)$ is a composition of Dehn twists along separating curves in $S,$ therefore, $\hat{g}$ is a composition of Dehn twists along separating curves in $\widehat{S},$ since separating curves in $S$ stay separating in $\widehat{S}.$
	
	Similarly, if $f\in [J_2(S),J_2(S)],$ then $\hat{f}\in [J_2(\widehat{S}),J_2(\widehat{S})].$
	
	By Proposition \ref{prop:J_2}, $\rho_p(\hat{f})=id_{\mathcal{S}_p(\widehat{S})} \mod (\zeta_p-1)^2,$ for any odd prime $p\geq 5.$ However, by Theorem \ref{thm:asymptFaithful}, $\rho_p(\hat{f})\neq id_{\mathcal{S}_p(\widehat{S})}$ for any large enough prime $p.$ By Proposition \ref{prop:infiniteOrder} and Remark \ref{rk:projective}, we deduce that $\rho_p(\hat{f})$ has infinite order for any large enough prime $p,$ and therefore the same is true for $\rho_p(f).$
\end{proof}

The proof of Theorem \ref{thm:main2} is similar, but uses Proposition \ref{prop:infiniteOrder2} instead:

\begin{proof}[Proof of Theorem \ref{thm:main2}] Let $S=\Sigma_{g,n},$  let $f\in J_2(\Sigma_{g,n})$ and $\widehat{S}$ and $\hat{f}$ be defined as above.
	The orders of $\rho_p(f)$ and $\rho_p(\hat{f})$ are equal by Lemma \ref{lem:fillingBoundComp}, and $\rho_p(\hat{f})=id_{\mathcal{S}_p(\widehat{S})} \mod (\zeta_p-1)$ by Theorem \ref{prop:J_2}, but  $\rho_p(\hat{f})\neq id_{\mathcal{S}_p(\widehat{S})}$ for any large enough prime $p$ by Theorem \ref{thm:asymptFaithful}. We conclude using Proposition \ref{prop:infiniteOrder2} that $\rho_p(f)$ has either order $p$ or infinite order, for any large enough prime $p.$ 
\end{proof}

\bibliographystyle{hamsalpha}
\bibliography{biblio}
\end{document}